\def\be{\begin{equation}}
\def\ee{\end{equation}}
\def\sign{\hbox{\rm sign}\,}
\def\abs#1{\left\vert #1 \right\vert} 
\newtheorem{lemm}{Lemma}[section]
\newtheorem{defi}{Definition}[section]
\newtheorem{thm}{Theorem}[section]
\newtheorem{prop}{Proposition}[section]
\newtheorem{lemma}{Lemma}[section]
\newtheorem{algo}{Algorithm}[section]
\def\l1{L1}  %
\def\bp{\begin{prop}}
\def\ep{\end{prop}}
\def\bpf{\begin{proof}}
\def\epf{\end{proof}}
\def\bd{\begin{defi}}
\def\ed{\end{defi}}
\def\br{\begin{oss}}
\def\er{\end{oss}}
\def\bl{\begin{lemma}}
\def\el{\end{lemma}}
\begin{document}

\begin{frontmatter}

\title{A Fast Splitting Method for efficient  Split Bregman Iterations \tnoteref{mytitlenote}}
\tnotetext[mytitlenote]{Method for  \href{http://www.ctan.org/tex-archive/macros/latex/contrib/elsarticle}{CTAN}.}

\author[a]{D. Lazzaro}

\author[b]{E. Loli Piccolomini}
\cortext[mycorrespondingauthor]{Corresponding author}
\ead{elena.loli@unibo.it}

\author[a]{F. Zama}

\address[a]{Department of Mathematics, University of Bologna,
Piazza di Porta San Donato, 5, 40126, Bologna (ITALY)}
\address[b]{Computer Science and Engineering Department, University of Bologna,
Mura Anteo Zamboni, 7, 40126, Bologna (ITALY)}

\begin{abstract}
In this paper we propose a new fast splitting algorithm to solve the Weighted Split Bregman minimization problem
in the backward step of an accelerated Forward-Backward algorithm. Beside proving the  convergence of the method, 
numerical tests, carried out on different imaging applications, prove the accuracy and computational efficiency of the proposed algorithm.
\end{abstract}

\begin{keyword}
Weighted Total Variation, accelerated Forward Backward, FISTA, weighted Split Bregman.
\end{keyword}

\end{frontmatter}

\linenumbers
%
\section{Introduction\label{FBstep}}
A large number of important image processing applications require the solution of a regularized optimization problem. In order
to cope with the inner ill-conditioning of the model and its  sensitivity to noise, a data fit term is balanced by a weighted regularization term.
Among the different regularization functions, the Total Variation (TV) and the Weighted Total Variation (WTV) have recently
gained increasing attention because of their edge preserving properties \cite{ROF92,Zhang2010}. Therefore we focus on 
the numerical solution of the regularized minimization problem:
\begin{equation}
\min_{u} \left \{ f(u) + \lambda \ WTV(u) \right \}
 \label{eq:RegProb}
\end{equation}
where $f(u)$ is the least squares fit term,  $WTV(u)$ is the weighted total variation regularization term and $\lambda>0$ is the regularization parameter.
The choice of the weighting function in $WTV$ is crucial   to filter out noise while preserving the image edges. In this paper we apply the non-convex 
{\it log-exp} function,  proposed in \cite{Montefusco2013}. The solution of problem \eqref{eq:RegProb} is tackled  by an Accelerated Forward Backward algorithm 
where a modified FISTA acceleration strategy \cite{Chambolle2015}  is applied to the Backward step. The Weighted Split Bregman (WSB) method, used to
compute the Backward step, generates a sequence of inner linear systems which constitute the computational  core of  the whole algorithm. In this work we propose 
an iterative solver (FWSB) based on a new matrix splitting which uses   the matrices structure to achieve accurate and efficient solutions. 
Besides proving the convergence of our iterative method, we compare it to the Gauss Seidel solver on different imaging problems. The tests 
confirm its better performances in terms of accuracy and computation times.

The present paper is organized as follows: in section \ref{S1} we present the accelerated Forward-Backward algorithm.
In section \ref{A_B} the details of the Backward steps are examined  and in section \ref{SPL} the new splitting algorithm is introduced 
and its convergence proven. Finally the numerical results and conclusions are reported in sections \ref{NumRes}, \ref{Con} respectively.

\section{The Accelerated Forward Backward Algorithm \label{S1}}

In this section we introduce our WTV function as the sum of $L_1$ norm  of the weighted gradient $(\nabla_x^w, \nabla_y^w)^T$  of the image $u$ along the coordinate directions:
$$  WTV(u)= \|\nabla_x^w u\|_1+\|\nabla_y^w u\|_1 $$
where:
\begin{equation}
\|  \nabla_x^w u\|_1=\sum_{i,j=1}^N  (w_{i,j}^x) | u_x|_{i,j}, \ \ \|  \nabla_y^w u\|_1=\sum_{i,j=1}^N   (w_{i,j}^y) | u_y|_{i,j}
\label{pesi}
\end{equation}
and $w_{i,j}^x>0$ and $w_{i,j}^y>0$ are constants that weight the first order differences $u_x$ and $u_y$, along the vertical and horizontal directions 
respectively.
The choice of the weights $w^x$ and $w^y$ is crucial in our approach.
In order to preserve the image edges, the weight for a pixel $u_{i,j}$ can be chosen to be  inversely proportional to 
the local value of the gradient. Therefore it is small when the gradient of the image is large, hence when there is an edge, 
and it is large when the gradient of the image is small, hence in locations corresponding to uniform areas 
where small variations are mainly due to the presence of noise. We define  the weights of the WTV,  at each pixel, as the derivative of a
strongly non-convex function of the gradient of the noisy  image $u$ in the same pixel.
In particular we choose  the derivative of the non-convex {\it log-exp} function $\phi_{\mu}(t)$  as proposed in \cite{Montefusco2013}, 
 \begin{equation}
\label{deffi}
\phi_{\mu}(|t|)=\frac{1}{\log 2} \log \left ( \frac{2}{1+e^{-\frac{|t|}{\mu} }} \right), \quad \mu>0,
\end{equation}
whose derivative is given by
 \begin{equation}
\label{deffip}
\phi'_{\mu}(|t|)=\frac{1}{\mu \cdot log(2)} \frac{1}{1+e^{\frac{|t|}{\mu}}}
\end{equation}
and satisfies:
\begin{equation}
\left\{
\begin{tabular}{ccc}
$\phi_{\mu}'(|t|)\rightarrow 0$& for $|t|>\mu$&\\
& &$\mu>0$\\
$\phi'_{\mu}(|t|)$ large &for $|t| <\mu$\\
\end{tabular}
\right. 
\end{equation}
$\phi'_{\mu}(|\cdot|)$ approaches to zero near the edges, where the gradient gets large, 
while it is large in smooth areas where the gradient becomes small. 
In fact, besides separating edges from smooth areas, $\phi'_{\mu}(|\cdot|)$ also identifies the small differences in intensity variations within the smooth areas. 
\par
Since we adopt anisotropic TV discretization, our weights $w_{i,j}^x$ and $w_{i,j}^y$ are different along the $x$ and $y$ directions and are given by:
\begin{align}
 w^x_{i,j}=\phi'_{\mu}(|u_x|_{i,j})= \frac{1}{\mu \cdot log(2)} \frac{1}{1+e^{\frac{|u_x|_{i,j}}{\mu}}}, \label{pesix}\\ 
  w^y_{i,j}= \phi'_{\mu}(|u_y|_{i,j})=\frac{1}{\mu \cdot log(2)} \frac{1}{1+e^{\frac{|u_y|_{i,j}}{\mu}}}.\label{pesiy}
\end{align}

By setting the data fit function $f$ as the least squares distance from the data $z$, we have
\begin{equation}
f(u) = \frac{1}{2 }\| \Phi u - z \|_2^2
\label{eq:model}
\end{equation}
where $\Phi$ is an $m \times n$ linear operator used to model different applications. It can be 
 a convolution operator in the deblurring problem or a subsampling measurement operator in the compressive sensing problem, etc.

Finally we define our problem as follows:
\begin{equation}
\hbox{find } \ u^*, \ \ s.t. \ \ 
u^*= \arg \min_{u} \left \{ \frac{1}{2 }\| \Phi u - z \|_2^2+ \lambda \left(\|\nabla_x^w u\|_1+\|\nabla_y^w u\|_1 \right) \right \}.
\label{eq:convex}
\end{equation}

Problem \eqref{eq:convex} is convex and non differentiable and  it has a unique solution under the trivial hypothesis of $\Phi \neq 0$.\\Different methods can be used for its solution such as  Chambolle Pock  \cite{Chambolle10},  Split-Bregman \cite{Oscher2009}, Alternating Minimization  \cite{Wang2008}. All the methods should converge to the same point, with different rate.   In this paper we use the Forward-Backward (FB) algorithm for the solution of the convex minimization problem \eqref{eq:convex}, since it requires the tuning of very few parameters which is a great advantage in real applications. 
\\
We solve \eqref{eq:convex} by a converging sequence of Accelerated Forward-Backward steps 
$(v^{(n)}, u^{(n)}), n=1,2, \ldots$ where a modified FISTA  acceleration strategy \cite{Chambolle2015} 
is applied to the backward step. Given $u^{(0)}$, we compute for $n=1,2 \ldots$: 
\begin{flalign}
v^{(n)}={u}^{(n-1)} +\beta \Phi^t \left (z - \Phi{u}^{(n-1)} \right ) \\
{\tilde u}^{(n)} = \arg\min_{u} \left \{\lambda  \left (\|  \nabla_x^w u\|_1+ \|  \nabla_y^w u\|_1 \right ) +\frac{1}{2  \beta} 
\| u - v^{(n)} \|_2^2 \right \}\label{B} 
\end{flalign}

 \begin{equation}
u^{(n)}={\tilde u}^{(n-1)}+\alpha({\tilde u}^{(n)}-{\tilde u}^{(n-1)}), 
 \label{eq:F}
 \end{equation}
and $\alpha$ is chosen as follows:
 \begin{equation}
\alpha=\frac{t_{n-1}-1}{t_{n}},  \ \ 
t_n = \frac{n+a+1}{a}.
\label{eq:alpha}
\end{equation}
%
In our experiments, we set $a=2$.
In order to ensure the  convergence of the sequence $(v^{(n)}, u^{(n)})$ to the solution of \eqref{eq:convex}, 
the following condition on $\beta$ must hold \cite{Chambolle2015}:
$$ 0 < \beta < \frac{1}{\lambda_{max}(\Phi^t\Phi)}$$ 
where $\lambda_{max}$ is the maximum eigenvalue in modulus.
The Forward-Backward iterations are stopped with  the following  stopping condition:
\begin{equation}
\frac{\|u^{(n)}-u^{(n-1)}\|_2}{\|u^{(n)}\|_2} < \epsilon 
\ \ \ \hbox{where} \ \ \ 
\epsilon>0.
\label{stop}
\end{equation}

We observe that while $v^{(n)}$ and  $u^{(n)}$ are computed by explicit formulae, for the computation of ${\tilde u}^{(n)}$ in \eqref{B},
 we introduce a Split-Bregman strategy (section \ref{A_B}) and we propose a modified matrix splitting in the solution of the arising linear system.
 
The steps of the Accelerated Forward backwards Algorithm (AFB)  are reported in algorithm \ref{alg2}.
\vspace{5mm}
\begin{table}[!h]
\begin{algo}[\textsc{ Algorithm AFB}]
\begin{center}
\begin{tabular}{c}\hline
 \textbf{Input: $r_0,z,\beta$, $w^x$, $w^y$ Output: $u^*$
}\quad \quad \quad \quad\quad \quad \quad \quad    \\ 
 \hline 
\begin{minipage}[c]{11cm}
\begin{tabular}{ll}
$ u^{(0)}=\phi^T z, \, w^x=1,\, 
w^y=1, \,\lambda_0=r_0\|u^{(0)}\|_1, \mu_0=\| \nabla u^{(0)}\|_1$\\[1pt]
$\tilde u^{(0)}=u^{(0)}=u^{(0)}$;\\[1pt] 
\quad  $n=0$\\
\quad {\tt repeat} \\[2pt]
  \quad\quad   $ {v^{(n+1)}}=u^{(n)}+ \beta \Phi^T(z-\Phi u^{(n)}) $ (Forward Step)  \\
 \quad \quad Compute ${\tilde u}^{(n+1)}$ by solving \eqref{B}\\ 
\quad\quad compute $\alpha$ as in \eqref{eq:alpha}  \\
\quad \quad $u^{(n+1)}={\tilde u}^{(n+1)}+\alpha({\tilde u}^{(n+1)}-{\tilde u}^{(n)})$\\[2pt]
 \quad\quad $ n=n+1$ \\[2pt]
  \quad {\tt until stopping condition}  as in \eqref{stop} \\[2pt]
$u^*=u^{(n)}$
\end{tabular}
\end{minipage}\\ \hline
\end{tabular}
\end{center}
 \label{alg2}
 \end{algo} 
\caption{Accelerated Forward Backward Algorithm}
\end{table}

%
%
We point out that  the minimization problem \eqref{B} can be  efficiently solved by means of different methods existing in literature.
We cite, among others, \cite{Oscher2009} and \cite{Wang2008}. In this paper, we use  
a splitting variable strategy, proposed in  \cite{Zhang2010}. 

\section{The Weighted Split Bregman Method \label{A_B}}

In this section we recall  the Split Bregman method for Weighted Total Variation 
for the solution of \eqref{B}.\\
Introducing two auxiliary vectors $D_x, D_y \in \mathbb{R}^{N^2}$ we rewrite \eqref{B} as 
a constrained minimization problem as follows:
\begin{equation}
\label{SB}
 \min_{u} \left \{\frac{1}{2 \beta }\| u - v^{(n)} \|_2^2 +  \lambda \left(\| D_x\|_1 + \| D_y \|_1 \right) \right \}, 
 \ \ \ s.t. \ \ D_x = \nabla_x^w u, \ \ D_y = \nabla_y^w u,
\end{equation}
where $\| D_x\|_1$ and $\| D_x\|_2$ are defined as in \eqref{pesi}.
Hence \eqref{SB} can be stated in its quadratic penalized form as:
\begin{equation}
\min_{u, D_x, D_y} \left \{\frac{1}{2 \beta } \| u - v^{(n)} \|_2^2 +  \lambda \left (\| D_x\|_1 + \| D_y \|_1 \right)  + 
\frac{\theta}{2}
\left (  \|  D_x-\nabla_x^w u \|_2^2 +  \| D_y-\nabla_y^w u \|_2^2 \right ) \right \}
\label{eq:LSB}
\end{equation}
where $ \theta>0$ represents the penalty parameter.
In order to  simplify the notation, exploiting the symmetry in the $x$ and $y$ variables, 
we use the subscript $q$  indicating either $x$ or $y$.

By applying the Split Bregman iterations, given an initial iterate $U^{(0)}$, we compute a sequence $U^{(1)},U^{(2)}, \ldots, U^{(j+1)}$ 
by splitting \eqref{eq:LSB}  into three minimization problems as follows. 

Given $e_q^{(0)}=0$, $D_q^{(0)}=0$ and $U^{(0)}={v}^{(n)}$, compute:
\begin{multline}
\label{UK}
 U^{(j+1)}=\arg \min_{u }\left\{ \frac{1}{2 \beta} \| u -  {v}^{(n)} \|_2^2+ 
 \frac {\theta}{2}  \| D^{(j)}_x-\nabla_x^w u -e_x^{(j)}\|_2^2+ \right . \\
\left. \frac { \theta}{2} \| D_y^{(j)}-\nabla_y^w u -e_y^{(j)}\|_2^2\right \}
\end{multline}
\begin{multline}
\label{Dx}
D_{q}^{(j+1)}=\arg \min_{ D_{q} }\left\{\lambda \|D_{q}\|_1+\frac {\theta}{2}  \| D_{q}-\nabla_{q}^w U^{(j+1)}-e_{q}^{(j)} \|_2^2 \right \}\\
= \mbox{Soft}_{\Lambda} (\nabla_{q}^w U^{(j+1)}+e_{q}^{(j)}),
\end{multline}
where 
\begin{equation}
\Lambda = \frac{\lambda}{\theta}
\label{eq:Lambda}
\end{equation}
and  $e_{q}^j$ is updated according to the following equation:
\begin{multline}
\label{ex}
e_{q}^{(j+1)}=e_{q}^{(j)}+\nabla_{q}^w U^{(j+1)}-D_{q}^{(j+1)}=e_{q}^{(j)}+\nabla_{q}^w U^{(j+1)}-\\ 
\mbox{Soft}_{\Lambda} (\nabla_{q}^w U^{(j+1)}+e_{q}^{(j)}) =\mbox{Cut}_{\Lambda} (\nabla_{q}^w U^{(j+1)}+e_{q}^{(j)}),
\end{multline}
 We remind that the Soft and the Cut operators apply point-wise  respectively as:
\begin{equation}
\label{Soft}
\mbox{Soft}_{\Lambda}(z)=\sign(z) \max \left\{ \abs{z}-{\Lambda},0\right \} 
\end{equation}
\begin{equation}
\label{Cut}
\mbox{Cut}_{\Lambda}(z)=z-\mbox{Soft}_{\Lambda} (z)= 
\left\{
\begin{tabular}{cc}
$\Lambda$&$z>{\Lambda}$\\
$z$&$-{\Lambda}\leq z \leq {\Lambda}$\\
$-{\Lambda}$&$z< -{\Lambda}$.\\
\end{tabular}
\right.
\end{equation}

By imposing first order optimality conditions in \eqref{UK},we compute the minimum $U^{(j+1)}$  by solving the following linear system
\begin{multline}
\label{SistL}
(\frac{1}{\beta}I-\theta \Delta^w)U^{(j+1)}= \frac{1}{\beta}v^{(n)} + \theta  (\nabla_x^w)^T(D_x^{(j)}-e_x^{(j)})+\\
 \theta (\nabla_y^w)^T(D_y^{(j)}-e_y^{(j)})
\end{multline}

where 
\begin{equation}
\Delta^w=- \left ((\nabla_x^w)^T \nabla_x^w+(\nabla_y^w)^T \nabla_y^w \right ).
\label{eq:lapl}
\end{equation}

Defining: 
\begin{equation}
A=(I-\beta \theta \Delta^w)
\label{eq:matr}
\end{equation}
and
\begin{equation}
b^{(j)}=v^{(n)} + \beta \theta  (\nabla_x^w)^T(D_x^{(j)}-e_x^{(j)})+\\
\beta \theta (\nabla_y^w)^T(D_y^{(j)}-e_y^{(j)})
 \label{eq:zj}
\end{equation}
the linear system \eqref{SistL} can be written as:
 \begin{equation}
 A U^{(j+1)} = b^{(j)},   
 \label{Slin1}
\end{equation}

 We observe that the solution of systems \eqref{Slin1} is a crucial point because it occurs in the inner loop of the backward step, therefore it is important to employ accurate and fast methods. 
Since the matrix $A$ is  sparse, strictly diagonally dominant and positive definite, the natural choice is to use the Gauss–Seidel or Conjugate Gradient  Methods. 

In the next paragraph we explain the details of our proposed method named Fast Weighted Split-Bregman (FWSB) to efficiently solve \eqref{Slin1}.  
\section{The proposed Matrix Splitting \label{SPL}}
In this paper, exploiting the structure of the matrix $A$ we obtain a matrix splitting of the form $E-F$ where $E$ is the Identity matrix
and $F$ is $\beta \theta \Delta^w$. We can prove that the iterative method, based on such a splitting, is convergent if 
\begin{equation}
 0 < \theta < \frac{1}{\beta \|\Delta^w \|. }
 \label{Theta}
\end{equation}
%

\begin{thm}
\label{th:t1}
 Let $E$ and $F$ define a splitting of the matrix $A=E-F$ in  \eqref{Slin1} as:
 \begin{equation}E = I, \ \ \ F= \beta  \theta \Delta^w.
\label{eq:split}
\end{equation}
  By choosing $\theta$ as in \eqref{Theta} we can prove that the spectral radius $\rho(E^{-1}F)<1$ and, for each right-hand side $B$, the following iterative method 
\begin{equation}
\label{solS}
X^{(m+1)}=F X^{(m)}+ B, \ \ m=0, 1, \ldots 
\end{equation}
 converges to the solution of the linear system $AX=B$.
\end{thm}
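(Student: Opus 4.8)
The plan is to reduce the convergence claim to the classical fact that a stationary linear iteration $X^{(m+1)} = M X^{(m)} + B$ converges, for every starting point $X^{(0)}$ and every right-hand side $B$, to a unique fixed point precisely when $\rho(M) < 1$. Since the splitting \eqref{eq:split} takes $E = I$, the iteration matrix of \eqref{solS} is simply $M = E^{-1}F = F = \beta\theta\Delta^w$, and any fixed point $X = FX + B$ satisfies $(I-F)X = (E-F)X = AX = B$, so it is exactly the solution of the target system. Moreover $\rho(F)<1$ forces $1$ not to be an eigenvalue of $F$, hence $A=I-F$ is invertible and the fixed point is unique. Thus the whole argument reduces to establishing $\rho(\beta\theta\Delta^w) < 1$ under hypothesis \eqref{Theta}.

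First I would record the key structural property of $\Delta^w$: by its definition \eqref{eq:lapl}, $\Delta^w = -\left((\nabla_x^w)^T \nabla_x^w + (\nabla_y^w)^T \nabla_y^w\right)$ is a sum of two matrices of the form $-C^T C$, hence it is symmetric and negative semidefinite. Consequently all its eigenvalues are real and non-positive, and --- this is the step on which everything hinges --- for a symmetric matrix the spectral radius coincides with the spectral (2-)norm, so $\rho(\Delta^w) = \|\Delta^w\|$.

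With this identity in hand the remainder is immediate: scaling by the positive constant $\beta\theta$ gives $\rho(F) = \rho(\beta\theta\Delta^w) = \beta\theta\,\rho(\Delta^w) = \beta\theta\,\|\Delta^w\|$. Hypothesis \eqref{Theta}, namely $0 < \theta < 1/(\beta\,\|\Delta^w\|)$, then yields $\beta\theta\,\|\Delta^w\| < 1$, that is $\rho(E^{-1}F) = \rho(F) < 1$, which is exactly the convergence condition required.

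I expect the only point demanding genuine care to be the identification $\rho(\Delta^w) = \|\Delta^w\|$: this is where the symmetry (equivalently, normality) of $\Delta^w$ is essential, since for a general matrix one has only the inequality $\rho(\cdot) \le \|\cdot\|_2$. In fact even the weaker inequality would suffice to close the argument, as it already gives $\rho(F) \le \beta\theta\,\|\Delta^w\| < 1$, so the conclusion is robust; the equality is what makes the bound in \eqref{Theta} sharp. Should $\|\Delta^w\|$ in \eqref{Theta} denote a norm other than the 2-norm, I would instead invoke $\rho(F) \le \|F\|$ for any submultiplicative matrix norm and argue through that inequality.
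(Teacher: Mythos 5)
Your proposal is correct, but it takes a genuinely different route from the paper. The paper proves the theorem by invoking the Householder--Johns criterion: for a splitting $A=E-F$ with $A$ symmetric positive definite, $\rho(E^{-1}F)<1$ if and only if $E^*+F$ is symmetric positive definite; it then devotes a separate lemma to showing that $M=E+F=I+\beta\theta\Delta^w$ is positive definite, via an explicit five-point stencil computation and strict diagonal dominance, which is where the condition $\theta<1/(\beta\|\Delta^w\|_\infty)$ enters. You instead exploit the special feature $E=I$: the iteration matrix is $F=\beta\theta\Delta^w$ itself, which is symmetric negative semidefinite since $\Delta^w=-\bigl((\nabla_x^w)^T\nabla_x^w+(\nabla_y^w)^T\nabla_y^w\bigr)$, so $\rho(F)=\beta\theta\|\Delta^w\|_2$ and the hypothesis \eqref{Theta} closes the argument through the classical fixed-point theorem for stationary iterations, with solvability and uniqueness handled by noting $1\notin\sigma(F)$. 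Your approach buys brevity and sharpness: it needs no external splitting theorem, no stencil computation, and (with the $2$-norm reading of \eqref{Theta}) the condition becomes necessary as well as sufficient; your fallback $\rho(F)\le\|F\|$ also covers the $\infty$-norm reading used in the paper's lemma, since for the symmetric matrix $\Delta^w$ one has $\|\Delta^w\|_2\le\|\Delta^w\|_\infty$, so the paper's condition implies yours. What the paper's machinery buys in exchange is generality: the Householder--Johns route would survive unchanged if $E$ were replaced by a nontrivial (e.g.\ triangular, Gauss--Seidel-type) factor, whereas your argument is tied to the iteration matrix being symmetric, which here is a consequence of $E=I$.
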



In order to prove theorem \ref{th:t1} we first prove the following lemma. 
\begin{lemm}
\label{lm:l1}
Let $M=E+F$ where $E$ is the Identity matrix and $F=\beta \theta \Delta^w $ with 
$\theta,\beta >0$. If $0< {\theta}<\frac {1}{ \beta \|\Delta^w\|_\infty}$
  then $M$ is a symmetric positive definite matrix. 
\end{lemm}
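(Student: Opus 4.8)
The plan is to establish symmetry and positive definiteness of $M$ separately, deducing the latter from a direct analysis of the eigenvalues of $M$ in terms of those of $\Delta^w$.

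First I would observe that $M$ is symmetric. From the definition \eqref{eq:lapl}, $\Delta^w = -\left((\nabla_x^w)^T\nabla_x^w + (\nabla_y^w)^T\nabla_y^w\right)$ is a sum of matrices of the Gram form $B^TB$, each of which is symmetric; hence $\Delta^w$ is symmetric, and so is $F = \beta\theta\Delta^w$. Since $E=I$ is symmetric, $M = E+F$ is symmetric and, in particular, has real eigenvalues.

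Next I would analyze the spectrum of $M$. Because each Gram matrix $B^TB$ is positive semidefinite, $\Delta^w$ is negative semidefinite, so all its eigenvalues $\mu$ satisfy $\mu \le 0$. If $\mu$ is an eigenvalue of $\Delta^w$ with eigenvector $v$, then $Mv = (1+\beta\theta\mu)v$, so the eigenvalues of $M$ are exactly the numbers $1+\beta\theta\mu$. Since $M$ is symmetric, it is positive definite if and only if all these eigenvalues are strictly positive, so it suffices to show $1 + \beta\theta\mu > 0$ for every eigenvalue $\mu$ of $\Delta^w$.

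The remaining step, and the one requiring the hypothesis on $\theta$, is to bound $\abs{\mu}$. Here I would invoke the standard inequality $\rho(\Delta^w) \le \|\Delta^w\|_\infty$, valid because the spectral radius is dominated by any induced matrix norm. Since $\Delta^w$ is symmetric with real nonpositive eigenvalues, $\max_\mu \abs{\mu} = \rho(\Delta^w) \le \|\Delta^w\|_\infty$. The assumption $0 < \theta < \frac{1}{\beta\|\Delta^w\|_\infty}$ then gives $\beta\theta\,\abs{\mu} \le \beta\theta\|\Delta^w\|_\infty < 1$ for every eigenvalue $\mu$. Using $\mu \le 0$, this yields $1 + \beta\theta\mu \ge 1 - \beta\theta\,\abs{\mu} > 0$, so all eigenvalues of $M$ are positive and $M$ is symmetric positive definite. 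The main obstacle, such as it is, lies only in linking the $\infty$-norm appearing in the hypothesis to the spectral radius of $\Delta^w$; once the inequality $\rho(\Delta^w) \le \|\Delta^w\|_\infty$ is used, the conclusion follows immediately from the negative semidefiniteness of $\Delta^w$.
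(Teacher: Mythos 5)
Your proof is correct, but it takes a genuinely different route from the paper's. The paper argues entry-wise: using the explicit five-point stencil of $\Delta^w$ in \eqref{ElLapl}, it writes down the at most five nonzero entries of each row of $M$ and shows that the hypothesis on $\theta$ makes $M$ symmetric and strictly diagonally dominant with positive diagonal, hence positive definite (a Gershgorin-type argument); as a by-product this identifies $\|\Delta^w\|_\infty$ concretely as $\max_k 2(\alpha_k^2+\alpha_{k+1}^2+\eta_k^2+\eta_{k+N}^2)$, which is the quantity one actually computes when choosing $\theta$ in practice. You instead argue spectrally and structurally: $\Delta^w$ is symmetric and negative semidefinite because it is minus a sum of Gram matrices, the eigenvalues of $M$ are exactly the numbers $1+\beta\theta\mu$ over eigenvalues $\mu$ of $\Delta^w$, and the standard bound $\rho(\Delta^w)\le\|\Delta^w\|_\infty$ closes the argument. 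Your version is shorter and more general---it never uses the stencil structure or the specific weights, so it applies verbatim to any discretization in which $\Delta^w$ has the Gram form \eqref{eq:lapl}---and it sidesteps the slightly delicate absolute-value manipulation in the paper's dominance inequality (where, incidentally, the displayed condition ``$\beta\theta < \max_k(\cdots)$'' is a typo for its reciprocal). Two small remarks: since for $\mu\ge 0$ the number $1+\beta\theta\mu$ is trivially positive, the negative semidefiniteness you establish is not strictly needed, as the norm bound alone suffices; on the other hand it yields the sharper conclusion that the spectrum of $M$ lies in $(0,1]$, which is a nice complement to the paper's statement.
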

\begin{proof}
By definition of $\Delta^w$ in \eqref{eq:lapl},  it easily follows that $M=(I + \beta \theta \Delta^w)$
is a  real symmetric matrix. Moreover  in the finite discrete setting the $k$-th component of the product $\Delta^\omega u$ is:
\begin{multline}
\label{ElLapl}
\left (\Delta^\omega u \right )_{k}=-(\alpha_{k}^2+\alpha_{k+1}^2+\eta_{k}^2+\eta_{k+N}^2)u_{k}+\alpha_{k+1}^2u_{k+1}+ \\
\alpha_{k}^2 u_{k-1}+\eta_{k+N}^2u_{k+N}+\eta_{k}^2u_{k-N}
\end{multline}
where,  from equations \eqref{pesix} and \eqref{pesiy} it follows:
\begin{equation}
\alpha_k = \phi'_{\mu}(|u_x|_{i,j}) \ \ \eta_k = \phi'_{\mu}(|u_y|_{i,j}), \ \ k = (i-1)N+j 
\label{eq:weights}
\end{equation} 
Hence on each row there are at most  five non zero elements given by:\\
\begin{equation}
\label{definB}
\begin{tabular}{ll}
$M_{k,k}$=& ${1-}{\beta \theta}(\alpha_{k}^2+\alpha_{k+1}^2+\eta_{k}^2+\eta_{k+N}^2)$,\\
$M_{k,k-1}$=&${\beta  \theta}\alpha_{k}^2$, \\
$M_{k,k+1}$=&${\beta  \theta}\alpha_{k+1}^2$,\\
$M_{k,k-N}$=&${\beta  \theta}\eta_{k}^2$,\\
$M_{k,k+N}$=&${\beta \theta}\eta_{k+N}^2$\\
\end{tabular}
\end{equation}
In order to guarantee that the matrix $M$ is positive definite, 
it is sufficient to determine $\theta$
such that $M$ is strictly diagonally dominant, namely
\begin{multline*}
\abs {1-  \beta \theta (\alpha_{k}^2+\alpha_{k+1}^2+\eta_{k}^2+\eta_{k+N}^2) }>\beta  \theta
(\alpha_{k}^2+\alpha_{k+1}^2+\beta_{k}^2+\eta_{k+N}^2) \\ \quad \forall \,\, k=1,...,N^2
\end{multline*}
$$ \frac{1}{\beta \theta} > 2 (\alpha_{k}^2+\alpha_{k+1}^2+\eta_{k}^2+\eta_{k+N}^2) \quad \forall \,\, k=1,...,N^2$$ 
It easily follows that this relation is satisfied for 
$$\beta \theta <  \max_{k=1,..N^2}\left ( 2(\alpha_{k}^2+\alpha_{k+1}^2+\eta_{k}^2+\eta_{k+N}^2) \right )$$
namely, 
\begin{equation}
\label{CBDP}
 0< {\theta}<\frac {1}{\beta \|\Delta^w\|_{\infty}}
\end{equation}
\end{proof}
Proof of theorem \ref{th:t1}
\begin{proof}
 Using the Householder-Johns theorem \cite{A,B} $\rho(E^{-1}F)<1$ iff $A=E-F$ is symmetric positive definite (SPD) and $E^*+F$ 
 is symmetric and positive definite, where $E^*$ is the conjugate transpose of $E$. The matrix $A$ is SPD since it is symmetric and strictly diagonal dominant.
 From  Lemma \ref{lm:l1}, we have $E^*+F=M$ and therefore the condition on $\theta$ guarantees that $ E^*+F$ 
 is symmetric and positive definite.
\end{proof}

Hence we compute the Weighted Split Bregman solution $U^{(j+1)}, j=0,1,\ldots$ by means of the iterative method defined in \eqref{solS}
with $B = b^{(j)}$ as in \eqref{eq:zj}.

Substituting \eqref{eq:lapl} in \eqref{solS} we have:
\begin{equation}
X^{(m+1)}= - \beta  \theta  \left ((\nabla_x^w)^T \nabla_x^w+(\nabla_y^w)^T \nabla_y^w \right )X^{(m)}+ b^{(j)}.
\label{eq:eq_X}
\end{equation}
 By  substituting  \eqref{eq:zj} in \eqref{eq:eq_X} and  collecting $\nabla_x^w$, $\nabla_y^w$ we obtain :
\begin{multline}
 X^{(m+1)}= v^{(n)} + \beta \theta \left [ (\nabla_x^w)^T(-\nabla_x^w X^{(m)} + D_x^{(j)}-e_x^{(j)})+ \right . \\
 \left . +(\nabla_y^w)^T(-\nabla_x^w X^{(m)}+ D_y^{(j)}-e_y^{(j)}) \right ].\label{eq:iterations}
\end{multline}

In Table \ref{alg3} 
 we report the function Fast Weighted Split Bregman (FWSB)  for the solution of problem \eqref{B}.
The  output variable  $U^{(j)}$ is the computed solution and $\bar m$ is the number of total  iterations.
\begin{table}[!htbp]
\begin{algo}
\begin{center}
\begin{tabular}{c}\hline
\textbf{[$U^{(j)}$, $\bar m$]=FWSB($\lambda, \theta, \beta,{v}^{(n)},w^x,w^y$)}\\
 \hline 
\begin{minipage}[c]{11cm}
\begin{tabular}{ll}
$\Lambda=\frac{\lambda}{\theta}$ , $\bar m=0$, \\
 $U^{(0)}={v}^{(n)}$, $e_x^{(0)}=e_y^{(0)}=0$;\\
\quad  $j=1$\\
\quad \quad {\it repeat } &\\
\quad \quad \quad $U_x={\nabla_x^w} U^{(j-1)}$; $U_y={\nabla_y^w} U^{(j-1)}$; & \\
\quad \quad \quad $X_x^{(0)}=U_x$; $X_y^{(0)}=U_y$ & \\
 \quad  \quad \quad$z_x= U_x+e_x^{(j-1)}$; $z_y= U_y+e_y^{(j-1)}$; &\\
 \quad \quad \quad $e_x^{(j)}=Cut_{\Lambda}(z_x)$; $e_y^{(j)}=Cut_{\Lambda}(z_y)$; &\\
 \quad\quad \quad $m=0$\\
\quad \quad \quad {\it (Solution of problem \eqref{Slin1}) }\\
\quad \quad \quad {\it repeat } &\\
 \quad \quad \quad \quad $X^{(m+1)}={v}^{(n)}- \beta \theta\left({\nabla_x^w}^T(X_x+2 \cdot e_x^{(j)}-z_x)+
{\nabla_y^w}^T (X_y +2 \cdot e_y^{(j)}-z_y)\right)$&\\
 \quad   \quad \quad \quad $X_x={\nabla_x^w} X^{(m+1)}$; $X_y={\nabla_y^w} X^{(m+1)}$&\\
 \quad  \quad \quad \quad  $m=m+1$&\\
 \quad  \quad \quad {\it until {\tt  stopping condition \eqref{eq:stop_sb}}}&\\
 \quad \quad \quad$U^{(j+1)}=X^{(m)}$;&\\
 \quad \quad \quad $\bar m=\bar m+m;j=j+1$&\\
\quad \quad {\it until {\tt stopping condition \eqref{eq:stop_sb}}}&\\
\end{tabular}
\end{minipage} \\ \hline
\end{tabular}
\end{center}
 \label{alg3}
\end{algo} 
\caption{ FWSB Algorithm for the solution of problem \eqref{B}}
\end{table}

The  stopping condition of both the loops (with indices $j$ and $m$) is defined on the basis of the relative tolerance parameter $\tau$ as follows:
\begin{equation}
\|w^{(k+1)}-w^{(k)}\| \leq \tau \| w^{(k))} \|
 \label{eq:stop_sb}
\end{equation}
where $w^{(k)} \equiv U^{(j)}$ in the outer loop ($k\equiv j$)  and $w^{(k)} \equiv X^{(m)}$ in the inner loop ($k\equiv m$).

\section{Numerical Experiments \label{NumRes}}
In this section we analyze the results obtained by applying Algorithm \ref{alg2} to two representative test problems related to different image processing applications. The minimization problem \eqref{eq:convex} is solved applying the accelerated Forward Backward method together with the weighted split Bregman method. Our aim is to compare the proposed FWSB method with  Gauss Seidel (WSB\_GS) applied  to the linear system \eqref{eq:matr}.
The experiments are performed on a PC intel i7 with 32 Gbyte Ram, by using Matlab R2018a.

In test problem T1, we consider an image deblurring problem where the matrix $\Phi$ in \eqref{eq:convex} is a Gaussian blur operator obtained by the Matlab function {\tt fspecial}, with standard deviation $\sigma=1.5$ and size $9$. The algorithms are tested  both on noiseless and noisy data. 
The results reported here are relative to the case of Gaussian white noise with variance $\delta=0.5 \cdot 10^{-2}$.

Test problem T2 is a compressed sensing application, where $z$ represents  subsampled Magnetic Resonance data in the so called  Kspace and the matrix $\Phi$ in \eqref{eq:model} is the undersampled Fourier matrix,  obtained by the Hadamard  product between  the full resolution Fourier matrix $\mathbf{F}$ 
and the mask $\mathcal{M}$, i.e.
\begin{equation}\Phi = \mathcal{M} \circ \mathbf{F}.
\label{eq:mask}
\end{equation}
In the tests reported in the present work  we consider $\mathcal{M}$ as a radial mask with sampling percentages $S_p=3.98 \%$ and $S_p=4.3 \%$ relative to $8$ and $10$ radial lines respectively.

The quality of the reconstructed  image  is evaluated by means of the  Peak Signal to Noise Ratio (PSNR)
$$ PSNR=20 \log_{10}   \frac{\max(x)}{rmse}, \ \hbox{where} \ \ rmse=\sqrt{\frac{\sum_i \sum_j (u_{i,j}-x_{i,j})^2}{N^2}}. $$
where $x$ is the reference true image and $u$ is the reconstructed image.
Since our purpose is to evaluate the best possible solution obtained by each method, in all tests  the regularization parameter $\lambda$ is heuristically set to the best possible value with respect to PSNR.

\begin{table}[h!]
\centering
\begin{tabular}{c ccccc }
\hline
\multirow{2}{*}{Test} & \multirow{2}{*}{{\tt Par}}& \multicolumn{2}{c}{FWSB} & \multicolumn{2}{c}{WSB\_GS}\\
\cline{3-6}
                      &  & PSNR & time & PSNR & time   \\
\hline
\multirow{2}{*}{T1} & $\delta = 0 $  & $25.5$ & $7.20s$ & $25.27$ & $50.64s$\\
                    & $\delta = 0.5 \cdot 10^{-2}$ & $24.38$ & $5.74s$ & $24.29$ & $15.17s$\\
\hline
\multirow{2}{*}{T2} & $S_p=4.30$ & $36.22$  & $3.36s$ & $33.32$ & $12.64s$ \\
                    & $S_p=3.98$  & $28.91$ & $5.25s$ & $27.86$ & $17.94s$ \\
\hline
\end{tabular}
\caption{Values of PSNR and times for the different tests. The column {\tt Par} reports the parameters used in the tests:the variance noise ($\delta$) for T1 and the  sampling percentage ($S_p$) for T2. }
\label{tab:T1}
\end{table}
In table \ref{tab:T1} we report the PSNR and computation times obtained by the two test problems. Column {\tt Par} shows the parameters of each experiment: noise variance in case of deblur (T1), Sampling percentage in case of MRI (T2). 
We observe that FWSB is always the most efficient obtaining smaller computational times. Regarding the accuracy,
we can see that  FWSB always reaches the greatest values of PSNR. 
In figures \ref{fig:PSNR_time} and \ref{fig:PSNR_time_1} we can appreciate the evolution of PSNR and computation times after each FB iteration
for both FWSB and WSB\_GS, we remark the better performance FWSB in terms of accuracy and computation times.

\begin{figure}
  \begin{center}
\begin{tabular}{cc}    
      \includegraphics[width=2.2in]{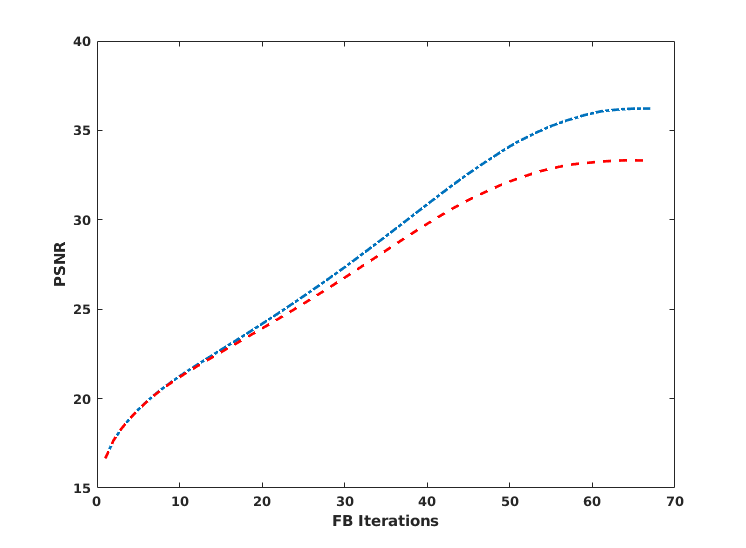} & 
			\includegraphics[width=2.2in]{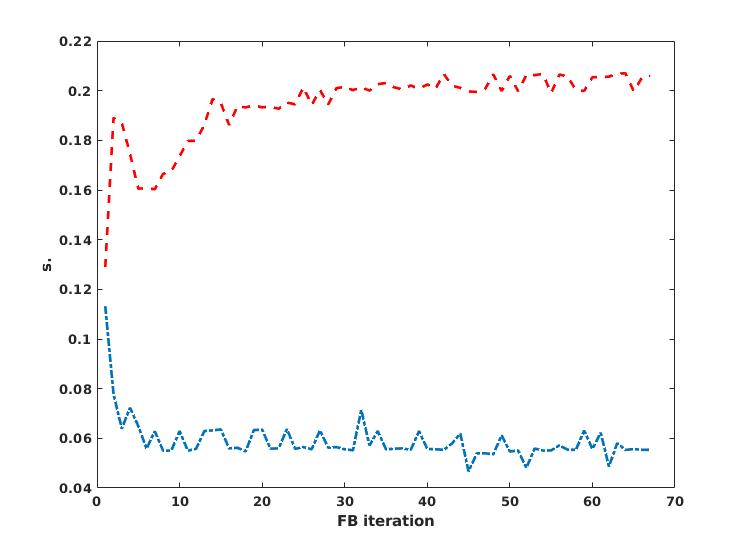} \\
			(a)  & (b)  \\
\end{tabular}
\caption{T2 test $S_p=4.30$, FWSB, blue dash-dot line; WSB\_GS, red dashed line. (a) PSNR vs. FB iterations. (b) Time in seconds(s.) vs FB iterations.}
  \label{fig:PSNR_time}
  \end{center}
\end{figure}

\begin{figure}
  \begin{center}
\begin{tabular}{cc}    
      \includegraphics[width=2.2in]{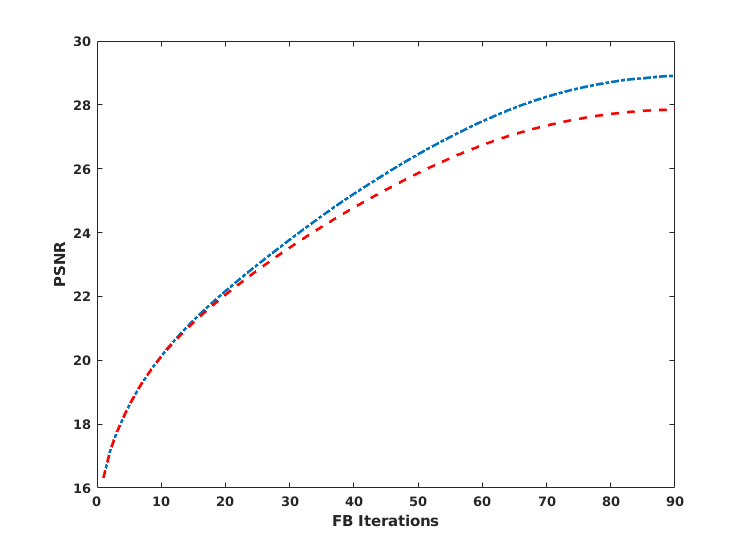} & 
			\includegraphics[width=2.2in]{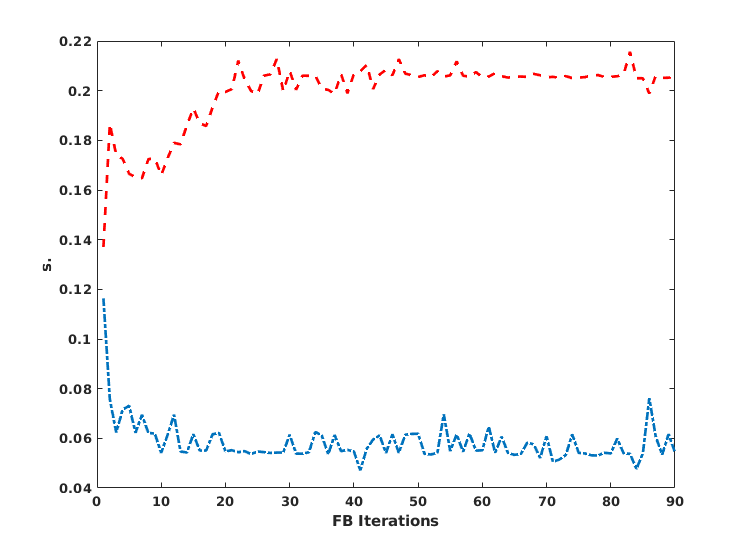} \\
			(a)  & (b)  \\
\end{tabular}
\caption{T2 test $SP=3.98$:  FWSB, blue dash-dot line; WSB\_GS, red dashed line. (a) PSNR vs. FB iterations. (b) Time in seconds(s.) vs FB iterations.}
  \label{fig:PSNR_time_1}
  \end{center}
\end{figure}

\section{Conclusions \label{Con}}

In this work we proposed a fast splitting Method for the solution of the inner step of the Weighted Split Bregmann method.
We proved its convergence and compared it to the most commonly used iterative methods.
After running a large set of experiments for different problems and datasets, we reported the most representative results obtained in the case of image deblurring and sparse MRI. From the results we can state that WFSB is the most efficient and accurate  method to be used
in the solution of Weighted Split Bregman Method.

\section*{References}

\bibliography{biblio}

\end{document}